\let\a\alpha
\let\b\beta
\let\d\delta
\let\e\epsilon
\let\g\gamma
\let\k\kappa
\let\m\mu
\let\n\nu
\let\o\omega
\let\r\rho
\newtheorem{thm}{Theorem}[section]
\newtheorem{problem}{Problem}[section]
\newtheorem{lemma}{Lemma}[section]
\newtheorem{coro}{Corollary}[section]
\newtheorem{rem}{Remark}[section]
\newenvironment{mt}[1][Main Theorem]{\noindent\textbf{#1.} }{\ }
\numberwithin{equation}{section}
\newcommand{\R}{{\mathbb R}}
\numberwithin{equation}{section}
\begin{document}

\title{Area-preserving mean curvature flow of rotationally symmetric hypersurfaces with free boundaries}


\author{Kunbo Wang}
\address{School of Mathematical Sciences, Zhejiang University, Hangzhou 310027, China.}
\email{21235005@zju.edu.cn}

\subjclass[2000]{Primary 35R35, 53C21, 53C44}

\date{December 16, 2017}

\keywords{area-preserving mean curvature flow,  free boundary}

\begin{abstract}
In this paper, we consider the area-preserving mean curvature flow
with free Neumann boundaries. We show that for a rotationally
symmetric $n$-dimensional hypersurface in $\R^{n+1}$ between two
parallel hyperplanes will converge to a cylinder with the same area
under this flow. We use the geometric properties and the maximal
principle to obtain gradient and curvature estimates, leading to
long-time existence of the flow and convergence to a constant mean
curvature surface.
\end{abstract}

\maketitle

\section{Introduction and the main results}
A Hypersurface $M_t$ in Eucilidean space is said to be evolving by
mean curvature flow if each point $X(\cdot)$ of the surface moves,
in time and space, in the direction of its unit normal with speed
equal to the mean curvature $H$ at that point. That is
\begin {equation} \label{1.1}
 \frac{\partial X}{\partial t}=-H\nu(x,t),
\end {equation}
where $\nu(x,t)$ is the outer unit normal. It was first studied by
Brakle in [6] from the viewpoint of geometric measure theory. In
[12], G. Huisken showed that any compact, convex hypersurface
without boundary converges asymptotically to a round sphere in a
finite time interval. Mean curvature flow is also the steepest
descent flow for the area functional, evolving to minimal surfaces.
In [13], Huisken initiated the idea of considering the following
volume-preserving mean curvature flow,
\begin {equation} \label{1.2}
\frac {\partial}{\partial t}X(x,t)=(h(t)-H(x,t))\nu(x,t),
\end {equation}
 where $h(t)=\frac{\int
_{M_t} H d\mu_t}{\int _{M_t}d\mu_t}$ is the average of the mean
curvature on $M_t$. Huisken proved if the initial hypersurface
$M_0^n$ $(n\geq 2)$ is uniformly convex, then the evolution equation
\eqref{1.2} has a smoothly solution $M_t$ for all times $0\leq t<
\infty$ and $M_t$ converges to a round sphere enclose the same
volume as $M_0$ in the $C^{\infty}$-topology as $t\rightarrow
\infty$. In [20], Pihan studied the following area-preserving mean
curvature flow.
\begin {equation} \label{1.3}
\frac {\partial}{\partial t}X(x,t)=(1-h(t)H(x,t))\nu (x,t).
\end{equation}
Here $h(t)=\frac{\int _{M_t} H d \mu _t}{\int_{M_t} H^2 d \mu _t}$.
Pihan showed that if the initial hypersurface is compact without
boundary, \eqref{1.3} has a unique solution for a short time under
the assumption $h(0)>0$. For $n=1$, Pihan also showed  that an
initially closed, convex curve in the plane converges exponentially
to a circle with the same length as the initial curve. For $n\geq
2$, Mccoy in [16] showed that if the initial $n$-dimensional
hypersurface $M_0$ is strictly convex£¬ then the evolution equation
\eqref{1.3} has a smooth solution $M_t$ for all time $0\leq t <
\infty$, and $M_t$ converge, as $t\rightarrow \infty$, in the
$C^{\infty}$-topology, to a sphere with the same surface area as
$M_0$. In [11], Huang and Lin use the idea of iteration of Li in
[14] and Ye in [23], in cases of volume preserving mean curvature
flow and Ricci flow, respectively. And obtained the same result, by
assuming that the initial hypersurface $M_0$ satisfies $h(0)>0$ and
$\int_{M_0}|A|^2-\frac{1}{n}H^2d\m\leq \e.$ \\

In this paper, we study the area preserving mean curvature flow with
free boundaries, where a restriction on the angles of boundaries
with fixed hypersurfaces in Euclidean space are imposed. In this
setting, there are some interesting works (cf. [7], [19] and [22]).
In these papers, the authors study the mean curvature flow with
Neumann and Dirichlet free boundaries. Let $\Sigma$ be a fixed
hypersurface smoothly embedded  in $\mathbb{R}^{n+1}$. We say
$X(x,t)$ is evolved by mean curvature flow with free Neumann bounary
condition on $\Sigma$, if
\begin{eqnarray*}
\frac{\partial X}{\partial t}&=&-H\nu,\quad \forall(x,t)\in M^n\times [0,T),\\
\langle\n_{M_t},\n_{\Sigma}\circ X\rangle(x,t)&=&0, \quad
\forall(x,t)\in
\partial M^n\times [0,T),\\
 X(\cdot,0)&=&M_0,\\
X(x,t)&\subset& \Sigma,\quad \forall(x,t)\in \partial M^n\times
[0,T).
\end{eqnarray*}
The volume-preserving mean curvature flow with free Neumann
boundaries was first studied by Athanassenas in [2].

Let $M_0$ be a complete $n$-dimensional hypersurface with boundary
$\partial M_0 \neq \emptyset. $ Assume $M_0$ is smoothly embedded in
the domain
$$ G=\{x \in \R^{n+1}: 0<x_{n+1}<d, d>0 \},$$
We denote by $\Sigma_i(i=1,2),$ the two parallel hyperplanes
bounding the domain $G$, and assume $\partial M_0 \subset
\Sigma_i(i=1,2)$. Then Athanassenas proved the following theorem in
[2].
\begin {thm}
Let $V,d \in \R$ be given two positive constants. $M_0 \subset G$ is
a smooth, rotationally symmetric, initial hypersurface which
intersects $\Sigma_i(i=1,2)$ orthogonally at the boundaries which
encloses the volume $V$. Then the free Neumann boundaries problem
for equation \eqref{1.2} has a unique solution on $[0, +\infty)$,
which converges to a cylinder $C \subset G$ of volume $V$ under
assumption $|M_0| \leq \frac {V}{d}$ as $t\rightarrow \infty$.
\end {thm}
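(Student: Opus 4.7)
The plan is to exploit rotational symmetry to reduce \eqref{1.2} to a scalar Neumann problem, establish the volume preservation and area monotonicity that are standard for \eqref{1.2}, and then obtain a uniform lower bound on the radius function that rules out neck-pinching. Convergence to a cylinder follows from standard parabolic arguments once geometric bounds are in hand.

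First, I would write $M_t$ as a hypersurface of revolution over the $x_{n+1}$-axis, parametrized by a radius function $r(z,t)$ on $[0,d]\times[0,T)$. The orthogonality condition at $\Sigma_1,\Sigma_2$ becomes the Neumann condition $r_z(0,t)=r_z(d,t)=0$, and a direct computation gives $H=-r_{zz}/(1+r_z^2)^{3/2}+(n-1)/(r\sqrt{1+r_z^2})$. Equation \eqref{1.2} reduces to the quasilinear parabolic equation
\begin{equation*}
r_t \;=\; \frac{r_{zz}}{1+r_z^2}\;-\;\frac{n-1}{r}\;+\;h(t)\sqrt{1+r_z^2},
\end{equation*}
with $h(t)=\int_{M_t}H\,d\mu_t/|M_t|$. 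Short-time existence follows from standard parabolic theory with Neumann data, and direct computation yields $\tfrac{d}{dt}\int_0^d r^n\,dz=0$ (volume preservation, up to the factor $\omega_n$) together with $\tfrac{d}{dt}|M_t|=-\int_{M_t}(H-h)^2\,d\mu_t\le 0$.

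The crucial step is a uniform lower bound $r(z,t)\ge\rho_0>0$ ruling out neck-pinching. Here the hypothesis $|M_0|\le V/d$ enters decisively: area monotonicity gives $|M_t|\le V/d$ for all $t$, and the geometric inequality
\[
|M_t| \;=\; n\omega_n\int_0^d r^{n-1}\sqrt{1+r_z^2}\,dz \;\geq\; \omega_n\int_0^d\lvert(r^n)_z\rvert\,dz,
\]
combined with volume conservation $V=\omega_n\int_0^d r^n\,dz$, constrains how small $\min_z r$ can become. Heuristically, if $\min r$ were to shrink toward zero while the total integral $\int r^n\,dz$ stays fixed, the total variation of $r^n$ over $[0,d]$ must grow, forcing $|M_t|$ to exceed $V/d$ and contradicting the monotonicity bound. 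Once $r$ is bounded above and below, a gradient bound on $|r_z|$ follows from the maximum principle applied to an auxiliary quantity (with the Neumann data handling the boundary terms), and higher derivatives, hence uniform curvature bounds, are controlled by standard parabolic regularity.

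With these uniform bounds the flow extends smoothly to $[0,\infty)$. Integrating the area identity gives $\int_0^\infty\int_{M_t}(H-h)^2\,d\mu_t\,dt<\infty$, so any sequence $t_k\to\infty$ has a subsequence along which $M_{t_k}$ converges smoothly to a rotationally symmetric constant mean curvature hypersurface meeting the planes orthogonally and enclosing volume $V$. In the regime enforced by $|M_0|\le V/d$, competing CMC candidates (spherical caps, unduloids) are ruled out by the combined area bound and the Neumann condition, so the limit is forced to be the cylinder of radius $R=(V/(\omega_n d))^{1/n}$; a standard interpolation argument promotes subsequential to full smooth convergence. The hard part of the whole scheme is the lower bound on $r$: making the sketched trade-off between $\min r$, the preserved volume, and the area ceiling $V/d$ fully quantitative—and handling separately the case where $\min r$ tries to degenerate at the free boundary rather than in the interior—is where the exact form of the hypothesis $|M_0|\le V/d$ is genuinely used.
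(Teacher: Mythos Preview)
This theorem is not proved in the present paper; it is quoted as Athanassenas's result from [2], and the paper's own contribution is the analogous statement for the area-preserving flow \eqref{1.3}. So there is no ``paper's own proof'' to compare against directly. That said, your sketch is essentially the standard Athanassenas argument, and the paper's Lemma 3.5 (which adapts the key step of [2] to the area-preserving setting) lets one see how the pinch-exclusion step is actually carried out.

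The one place where your route differs from the paper (and from [2]) is the lower bound on the radius. You argue via the coarea-type inequality $|M_t|\ge \omega_n\int_0^d |(r^n)_z|\,dz$ and a total-variation trade-off between $\min r$, the conserved volume, and the area ceiling $V/d$. The paper instead uses a projection argument: if $M_{t_0}$ pinches, project it onto $\Sigma_1$; since $M_{t_0}$ must intersect the cylinder $C$ of enclosed volume $V$ (by the volume constraint), the projection contains the disk of radius $\rho_C$, giving $|M_{t_0}|\ge |\pi(M_{t_0})|>\omega_n\rho_C^n=V/d$, which contradicts $|M_{t_0}|\le|M_0|\le V/d$. The two arguments encode the same geometry, but the projection version is cleaner to make rigorous: it delivers the strict inequality $|M_{t_0}|>V/d$ directly, whereas your total-variation bound as written only yields $|M_t|\to V/d$ as $\min r\to 0$, and you would still need to extract the strict gap to close the contradiction (this is exactly the ``hard part'' you flag at the end). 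Otherwise your outline---reduction to a scalar Neumann problem, area decay, gradient and curvature bounds via maximum principle, long-time existence, and identification of the CMC limit among Delaunay surfaces---matches the strategy of [2] and of the present paper.
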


Other works on this problem were investigated in [3], [4], [17] and
[18]. In this paper, we consider the following problem for
area-preserving mean curvature flow with free Neumann boundaries.
\begin{problem}
\begin{eqnarray*}
\frac {\partial}{\partial
t}X(x,t)&=&(1-h(t)H(x,t))\nu (x,t),\quad \forall(x,t)\in M^n\times [0,T),\\
\langle\n_{M_t},\n_{\Sigma_i}\circ X\rangle(x,t)&=&0, \quad
\forall(x,t)\in
\partial M^n\times [0,T),i=1,2,\\
 X(\cdot,0)&=&M_0,\\
X(x,t)&\subset& \Sigma_{i},\quad \forall(x,t)\in \partial M^n\times
[0,T), i=(1,2).
\end{eqnarray*}
\end{problem}
We prove the following main theorem for Problem 1.1.\\

\begin{mt} Let $V,d \in \R^{+}$  be given two constants and $M_0 \subset
G$ to be a smooth, rotationally symmetric, mean convex initial
hypersurface which intersects $\Sigma_i(i=1,2)$ orthogonally at the
boundaries. Then the solution to Problem 1.1 exists for all times
$t>0$ and converges to a cylinder of the same area with $M_0$ under
the assumption $|M_0|\leq \frac{V}{d}$.
\end{mt}
\begin {rem}
We say $M_0$ is mean convex if the mean curvature is positive
everywhere. The condition of mean convex will be used to prove the
equation \eqref{1.3} is strictly parabolic. In [20], Pihan shows
that the equation \eqref{1.3} is strictly parabolic for a short time
if $h(0)>0$. And as the case of volume-preserving mean curvature
flow in [3], Problem 1.1 is  a Neumann boundary problem for strictly
parabolic equation, from which we obtain the short time existence.
And also see [19] and [22] for details of general cases.
\end {rem}
This paper is organized as follows. In Section 2, we give some
definitions and preliminaries. In Section 3, we show some basic
properties of this flow. We prove that the property of mean
convexity can be preserved under equation \eqref{1.3} and the
surfaces do not pinch off under the condition of our main theorem.
In Section 4, we use the property of mean convexity and maximal
principle to show the curvature estimates. Gradient and curvature
estimates lead to long time existence to a constant curvature
surface. And we prove our main theorem in Section 5.\\
The methods we use here are those introduced by Athanassenas in [2],
Ecker and Huisken in [9]. We use the free Neumann boundary condition
to convert the boundary estimates to interior estimates (see Lemma
3.4, Lemma 4.1 and Theorem 4.1). We put the condition of mean
convexity here is to give an upper and lower bounds for $h(t)$ and
$v(x,t)$, which is crucial for our curvature estimates.

\section{Preliminaries }

We adopt the similar notations of Huisken in [12].  Let $M$ be an
$n$-dimensional Riemannian manifold. Vectors on $M$ are denoted by
$X=\{ X^i\}$, covectors by $Y=\{Y_i\}$ and mixed tensors by $T=\{
T_j^{ik}\}$. The induced metric and the second fundamental form on
$M$ are denoted by
 $g=g_{ij}$ and $A=\{ h_{ij}\}$ respectively. The surface area
 element of $M$ is given by
 $$\mu =\sqrt{det(g_{ij})}.$$
 For tensors $T_{ijkl}$ and $U_{ijkl}$ on M, we have the inner product
$$(T_{ijkl},U_{ijkl})=g^{i\a}g^{j\b}g^{k\g}g^{l\d}T_{ijkl}U_{\a\b\g\d}$$
and the norm
$$|T_{ijkl}|^2=(T_{ijkl},T_{ijkl}).$$
The trace of the second fundamental form,  $H=g^{ij} h_{ij},$ is the
mean curvature of $M$, and $ |A|^2=g^{ik}g^{jl}h_{ij}h_{kl}$ is the
square of the norm of the second fundamental form on $M$. We also
denote
$$\tilde{C}=tr(A^3)=g^{ij}g^{kl}g^{mn}h_{ik}h_{jm}h_{ln}.$$
If $X: M^n\hookrightarrow \R^{ n+1}$ smoothly embeds $M^n$ into
$\R^{n+1}$, then the induced metric $g$ is given by
$g_{ij}=\langle\frac{\partial X}{\partial x_i}(x),\frac{\partial
X}{\partial x_j}(x)\rangle$ and the second fundamental forms
$h_{ij}=\langle\frac{\partial X}{\partial x_i}(x),\frac{\partial
\nu}{\partial x_i}(x)\rangle$, where $\langle\cdot , \cdot\rangle$
is the ordinary scalar product of vectors in $\R^{n+1}.$ The matrix
of the Weingarten map of $M$ is  $h_j^i(x)=g^{ik}(x)h_{kj}(x).$ The
eigenvalues of this matrix are the principal curvatures of $M$. The
induced connection on $M$ is given via the Christoffel Symbols.
$$\Gamma_{ij}^{k}=\frac{1}{2}g^{kl}(\frac{\partial g_{jl}}{\partial x^i}+\frac{\partial g_{il}}{\partial x^j}
-\frac{\partial g_{ij}}{\partial x^l}).$$ The covariant derivative,
for a vector field $v=v^j \frac{\partial}{\partial x^j}$ is given by
$$\nabla _i v^j=\frac{\partial v^j}{\partial x^i }+\Gamma _{ik}
^j v^k,$$
$$\nabla _i v_j=\frac{\partial v_j}{\partial x^i }-\Gamma _{ik}
^j v_k.$$ The Laplacian of T is
$$\triangle T=g^{ij}\nabla _i \nabla_j T.$$
The Riemannian curvature tensor on $M$ can be given through the
Gauss Equations
$$R_{ijkl}=h_{ik}h_{jl}-h_{il}h_{jk}.$$
We denote $|M_t|$ to be the surface area of $M_t$.  We assume $M_0$
is mean convex and rotationally symmetric about an axis which
intersects $\Sigma _i$ orthogonally. We denote this axis by
$x_{n+1}$ and use the parametrization
$$\rho(x_{n+1}):[0,d]\mapsto \R$$
for the generating curve of a surface of revolution, which is a
radius function.

\section{ Basic properties}
From now on,  we write $[0,T)$ to indicate the maximal time interval
for which the flow exists. First we verify that the surface area
does indeed remain fixed under the area-preserving flow \eqref{1.3},
while the enclose volume does not decrease. The rotationally
symmetric property is preserved under the equation \eqref{1.3}. This
is clear from the evolution equation, since the mean curvature and
the normal are  symmetric.
\begin{lemma}
The surface area of $M_t$ remains constant throughout the  flow,
that is
$$\frac{d}{dt}\int _{M_t}d\mu _t\equiv 0.$$
\end {lemma}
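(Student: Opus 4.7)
The plan is to differentiate the area integral directly, using the standard formula for the evolution of the volume element under a normal deformation, and then exploit the specific form of $h(t)$ to see that the resulting integral cancels.

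First I would recall that for any smooth flow of the form $\frac{\partial X}{\partial t}=f\,\nu$ with $f=f(x,t)$ a scalar function, the induced metric evolves by $\frac{\partial}{\partial t}g_{ij}=-2f\,h_{ij}$, so that the area element satisfies
\begin{equation*}
\frac{\partial}{\partial t}\,d\mu_t=-fH\,d\mu_t.
\end{equation*}
In our situation $f=1-h(t)H(x,t)$. Since the parametrization domain $M^{n}$ is fixed in time, differentiation under the integral sign yields
\begin{equation*}
\frac{d}{dt}\int_{M_t}d\mu_t=\int_{M_t}\bigl(-fH\bigr)\,d\mu_t=-\int_{M_t}H\,d\mu_t+h(t)\int_{M_t}H^{2}\,d\mu_t.
\end{equation*}
Substituting the definition $h(t)=\frac{\int_{M_t}H\,d\mu_t}{\int_{M_t}H^{2}\,d\mu_t}$ makes the two terms cancel, giving $\frac{d}{dt}\int_{M_t}d\mu_t\equiv 0$.

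The only subtlety worth addressing explicitly is that $M_t$ has a boundary on $\Sigma_1\cup\Sigma_2$, so one might worry about a boundary contribution. However, because the deformation is purely normal and differentiation is carried out on the fixed reference manifold $M^{n}$, no boundary term enters the computation; the free Neumann condition $\langle\nu_{M_t},\nu_{\Sigma_i}\circ X\rangle=0$ merely guarantees that the boundary stays on $\Sigma_i$ and that the flow is well-defined. The mean-convexity assumption, in turn, ensures $\int_{M_t}H^{2}\,d\mu_t>0$ so $h(t)$ is well-defined on $[0,T)$.

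In summary, the proof is a one-line computation once the evolution of $d\mu_t$ is written down; I expect no real obstacles here, and the only thing to be careful about is a clean derivation of $\frac{\partial}{\partial t}\,d\mu_t=-fH\,d\mu_t$, which follows from $\frac{\partial}{\partial t}g_{ij}=-2f h_{ij}$ via the identity $\frac{\partial}{\partial t}\log\sqrt{\det g}=\tfrac{1}{2}g^{ij}\frac{\partial g_{ij}}{\partial t}$.
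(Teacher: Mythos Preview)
Your argument is correct and is essentially the same as the paper's: the paper invokes the first variation of area formula to write $\frac{d}{dt}\int_{M_t}d\mu_t=-\int_{M_t}(1-hH)H\,d\mu_t$ and then observes this vanishes by the definition of $h(t)$, which is exactly your computation via $\frac{\partial}{\partial t}d\mu_t=-fH\,d\mu_t$. Your added remarks on why no boundary term appears and why $h(t)$ is well-defined are helpful clarifications the paper leaves implicit.
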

\begin{proof} We apply the first variation of area formula to the vector
field ${\frac{\partial X}{\partial t}}$, extended appropriately, and
the divergence theorem,
\begin {align*}
\frac{d}{dt}\int _{M_t}d\mu_t=\int_{M_t} div_{M_t}(\frac {\partial
X}{\partial t})d \mu_t   = - \int_{M_t} (1-hH)H d\mu_t\equiv 0.
\end {align*}
\end {proof}
\begin{lemma}
The volume enclosed by $M_t$ does not decrease throughout the flow.
That is, if $E_t\subset \R^{n+1}$ is the $(n+1)$-dimensional set
enclosed by $M_t$ and the two parallel planes  $\Sigma _i$, then
$$\frac{d}{dt}Vol(E_t)\geq 0$$
\end {lemma}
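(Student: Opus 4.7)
The plan is to express $\frac{d}{dt}\mathrm{Vol}(E_t)$ as a boundary integral of the outward normal speed of $\partial E_t$, substitute the definition of $h(t)$, and recognize the remaining expression as a Cauchy--Schwarz defect.

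First I would decompose $\partial E_t = M_t \cup D_1(t)\cup D_2(t)$, where $D_i(t)=E_t\cap \Sigma_i$ is the planar cap lying on the fixed hyperplane $\Sigma_i$. Since the $\Sigma_i$ do not move, the points of each $D_i(t)$ have zero velocity; the caps change only through the sliding of their rims $\partial M_t\cap \Sigma_i$, and this sliding is tangential to $\Sigma_i$ and hence orthogonal to the outer normal of $E_t$ along $D_i$. By the standard transport formula for a domain with moving boundary, only $M_t$ contributes, so
\[
\frac{d}{dt}\mathrm{Vol}(E_t) = \int_{M_t}\bigl\langle \tfrac{\partial X}{\partial t},\nu\bigr\rangle\, d\mu_t = \int_{M_t}\bigl(1-h(t)H\bigr)\, d\mu_t = |M_t|-h(t)\int_{M_t}H\, d\mu_t.
\]

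Next I would substitute $h(t)=\frac{\int_{M_t}H\,d\mu_t}{\int_{M_t}H^2\,d\mu_t}$ and rewrite the right-hand side as
\[
\frac{|M_t|\int_{M_t}H^2\,d\mu_t-\bigl(\int_{M_t}H\,d\mu_t\bigr)^{2}}{\int_{M_t}H^2\,d\mu_t}.
\]
The Cauchy--Schwarz inequality applied to $H$ and $1$ on $M_t$ gives $\bigl(\int_{M_t}H\,d\mu_t\bigr)^{2}\le |M_t|\int_{M_t}H^2\,d\mu_t$, so the numerator is non-negative; positivity of the denominator is guaranteed as soon as $H$ is not identically zero, which follows from the mean convexity hypothesis (preserved along the flow, according to the main theorem). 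This yields $\frac{d}{dt}\mathrm{Vol}(E_t)\ge 0$.

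The main place where one must be careful, and really the only non-routine step, is the vanishing of the contribution from the planar caps $D_i(t)$; this uses both that the hyperplanes $\Sigma_i$ are fixed in time and that the Neumann condition keeps $\partial M_t$ inside $\Sigma_i$, so that the caps remain genuine pieces of $\partial E_t$ throughout the flow. After that reduction, the argument is purely algebraic: no further use of rotational symmetry or of curvature estimates is required for this particular lemma.
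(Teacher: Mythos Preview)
Your proof is correct and follows essentially the same approach as the paper: reduce $\frac{d}{dt}\mathrm{Vol}(E_t)$ to $\int_{M_t}(1-hH)\,d\mu_t$ by arguing that the planar caps on $\Sigma_i$ contribute nothing, then substitute $h(t)$ and conclude via Cauchy--Schwarz. The paper frames the first reduction via the divergence theorem applied to an extension of $\partial X/\partial t$ to $E_t$ (invoking the Neumann condition to kill the $\Sigma_i$ terms), whereas you invoke the transport formula directly; the content is the same, and your justification for the vanishing cap contribution is in fact a bit more explicit than the paper's.
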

\begin{proof} We first extend $\frac{\partial
X}{\partial t}$ to a vector field on the whole of $E_t$, then apply
the first variation of area formula and divergence theorem,
\begin{eqnarray*}
\frac{d}{dt}Vol(E_t)&=&\int_{E_t} div_{\R^{n+1}}(\frac{\partial
X}{\partial t})dV=\int_{\partial E_t}<\frac{\partial X}{\partial
t}\nu>d\mu_t\\& =&\int_{M_t}<\frac{\partial X}{\partial
t}\nu>d\mu_t+\int_{\Sigma_i}<\frac{\partial X}{\partial
t}\nu>d\mu_t\\&=& \int_{M_t}d\mu_t-\frac{(\int_{M_t} H
d\mu_t)^2}{\int_{M_t} H^2 d\mu_t}\geq 0.
\end{eqnarray*}
Here we have use the free Neumann boundary condition to obtain the
integral on $\Sigma _i$ is $0$.
\end{proof}
As in [16] (Section 4), we have the following evolution equations.
\begin {lemma}
We have
\\(1) $\frac{\partial}{\partial t}g_{ij}=2(1-hH)h_{ij};$
\\(2) $\frac{\partial}{\partial t}g^{ij}=-2(1-hH)h^{ij}$;
\\(3) $\frac{\partial}{\partial t}\n=h\nabla H$;
\\(4) $\frac{\partial}{\partial t} h_{ij}=h\triangle h_{ij}+(1-2hH)h^m_i h_{mj}+h|A|^2h_{ij};$
\\(5) $\frac{\partial}{\partial t}h_j^i=h(\triangle h^i_j+|A|^2h^i_j)-h^i_m h^m_j;$
\\(6) $\frac{\partial}{\partial t} H=h\triangle H -(1-hH)|A|^2;$
\\(7) $\frac{\partial}{\partial t}|A|^2=h(\triangle |A|^2-2|\nabla A|^2+2|A|^4)-2\tilde{C};$
\\(8) $(\frac{\partial}{\partial t}-h\triangle)H^2=-2h|\nabla H|^2-2(1-hH)H|A|^2.$
\end {lemma}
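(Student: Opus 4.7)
The plan is to derive each formula from the standard first-variation identities for geometric quantities under a general normal deformation $\partial_t X = f\nu$, and then specialize to the area-preserving speed $f = 1 - h(t)H$. The crucial observation that makes this specialization clean is that $h(t)$ depends only on $t$, so intrinsic derivatives pass through it: $\nabla f = -h(t)\nabla H$ and $\nabla_i\nabla_j f = -h(t)\nabla_i\nabla_j H$. This is what allows every $\nabla f$ or $\nabla^2 f$ appearing in the classical formulas to be absorbed into a single derivative of $H$.

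Equation (1) follows immediately from the classical identity $\partial_t g_{ij} = 2 f h_{ij}$, and (2) is obtained from (1) by differentiating $g^{ik}g_{kj} = \delta^i_j$. Equation (3) follows from the classical formula $\partial_t \nu = -\nabla f$ combined with the observation above. The main technical step is equation (4). Starting from the standard formula $\partial_t h_{ij} = -\nabla_i\nabla_j f + f\,h^m_i h_{mj}$, I would substitute $-\nabla_i\nabla_j f = h\nabla_i\nabla_j H$ and then invoke Simons' identity $\Delta h_{ij} = \nabla_i\nabla_j H + H\,h^m_i h_{mj} - |A|^2 h_{ij}$ to trade $\nabla_i\nabla_j H$ for $\Delta h_{ij}$. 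Collecting terms produces exactly the combination $h\Delta h_{ij} + (1-2hH)h^m_i h_{mj} + h|A|^2 h_{ij}$.

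The remaining identities follow by algebraic manipulation. Equation (5) comes from raising an index via $h^i_j = g^{ik}h_{kj}$, combining the product-rule contributions of (2) and (4). Equation (6) is obtained by tracing (4) with $g^{ij}$, using (2) for the additional $|A|^2$ term. For (7), I would differentiate $|A|^2 = g^{ik}g^{jl}h_{ij}h_{kl}$ with the product rule, substitute (2) and (4), and use the Bochner-type identity $\Delta|A|^2 = 2h^{ij}\Delta h_{ij} + 2|\nabla A|^2$ to expose the $-2|\nabla A|^2$ term; the $-2\tilde{C}$ arises from the $(1-2hH)h^m_i h_{mj}$ contribution of (4) after full contraction, since the $1$ in $(1-2hH)$ produces precisely $\mathrm{tr}(A^3)$ while the $-2hH$ part combines with the other terms into the $2h|A|^4$ piece. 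Finally, (8) follows from (6) by the product rule $\partial_t H^2 = 2H\partial_t H$ together with $\Delta H^2 = 2H\Delta H + 2|\nabla H|^2$. The only genuine obstacle is bookkeeping the signs and index contractions in (4) and (7); once those two are nailed down, the other identities unfold mechanically.
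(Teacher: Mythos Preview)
Your derivation is correct and is exactly the standard computation: general normal-variation formulas with $f=1-hH$, then Simons' identity $\Delta h_{ij}=\nabla_i\nabla_j H+Hh^m_ih_{mj}-|A|^2h_{ij}$ to rewrite $\nabla_i\nabla_j H$ in (4), and straightforward contractions for the rest. The paper itself gives no proof of this lemma at all; it simply cites McCoy~[16], Section~4, where precisely this calculation is carried out, so your approach \emph{is} the paper's approach once the citation is unpacked.

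One small correction to your bookkeeping remark on (7): if you go through $|A|^2=g^{ik}g^{jl}h_{ij}h_{kl}$ and (4) as you describe, the $(1-2hH)h^m_ih_{mj}$ term alone contributes $2(1-2hH)\tilde C$, and it is the two $\partial_t g^{ik}$ factors that supply an additional $-4(1-hH)\tilde C$; only after combining do the $hH\tilde C$ pieces cancel to leave $-2\tilde C$. A cleaner route is to write $|A|^2=h^i_jh^j_i$ and use (5) directly, so that $\partial_t|A|^2=2h^j_i\partial_t h^i_j$ and the $-h^i_mh^m_j$ term of (5) contracts immediately to $-2\tilde C$ with no metric derivatives to track.
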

\begin {lemma}
The mean curvature is positive on $M_t$, $t\in [0,T)$. Furthermore
on the boundaries $\partial M_t$, we have $\lim
\limits_{x_{n+1}\rightarrow 0} H(x,t)=a(t)>0$, $\lim
\limits_{x_{n+1}\rightarrow d}H(x,t)=b(t)>0.$
\end {lemma}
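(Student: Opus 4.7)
The plan is to apply the parabolic maximum principle to the evolution equation from Lemma~3.3(6),
\[
\frac{\partial H}{\partial t} = h\triangle H - (1-hH)|A|^2,
\]
together with a Neumann boundary condition on $H$ derived from the orthogonality hypothesis. By rotational symmetry, I parametrize $M_t$ by the radial profile $\rho(z,t)$ over $z = x_{n+1} \in [0,d]$, so that orthogonality with $\Sigma_i$ becomes $\rho_z(0,t) = \rho_z(d,t) = 0$, and the flow reads $\rho_t = v(1-hH)$ with $v = \sqrt{1+\rho_z^2}$. Differentiating the latter in $z$ and evaluating at $z=0$, where $\rho_z = 0$ gives $v = 1$ and $v_z = 0$, yields $\rho_{zt} = -h H_z$; combining this with $\rho_z(0,t)\equiv 0$, which forces $\rho_{zt}(0,t) = 0$, I obtain $H_z(0,t) = 0$ (and similarly at $z=d$) provided $h > 0$. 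This is the sense in which the free boundary condition is converted to a homogeneous Neumann problem for $H$.

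Since $M_0$ is mean convex, $h(0) > 0$, and by continuity $H > 0$ and $h > 0$ on some maximal subinterval $[0,t^*)\subset[0,T)$. I will show $t^* = T$ by contradiction: if $t^* < T$, then at $t^*$ there is a first touching point $(x_0,t^*)$ with $H(x_0,t^*)=0$. The Neumann condition $H_z=0$ on $\partial M_{t^*}$ combined with the Hopf boundary point lemma forces $x_0$ to lie in the interior. At $(x_0,t^*)$ the standard parabolic inequalities $\nabla H = 0$, $\triangle H \geq 0$, $\partial_t H \leq 0$ hold, and the evolution equation specializes to $\partial_t H(x_0) = h\triangle H(x_0) - |A|^2(x_0)$, with $|A|^2(x_0) > 0$ because rotational symmetry and $\rho > 0$ give $|A|^2 \geq (n-1)/(\rho v)^2$.

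The main obstacle is that the reaction term $-(1-hH)|A|^2$ is a genuine negative source when $hH < 1$, so the scalar maximum principle does not by itself yield a contradiction. The plan to close the argument is to exploit the one-dimensional parabolic equation in $(z,t)$ afforded by rotational symmetry, and combine the strong maximum principle with the non-pinching lower bound $\rho \geq \rho_0 > 0$ obtained elsewhere in Section~3 from the hypothesis $|M_0| \leq V/d$. The non-pinching provides a local uniform bound on $|A|^2$ near the touching time, and an ODE comparison for $H_{\min}(t)$ based on $\partial_t H_{\min} \geq -(1-hH_{\min})|A|^2$, together with a Gronwall-type argument, prevents $H_{\min}$ from actually reaching zero in finite time, contradicting the choice of $t^*$. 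Finally, the boundary limits $a(t) = \lim_{z\to 0} H(z,t)$ and $b(t) = \lim_{z \to d} H(z,t)$ exist by smoothness of $H$ up to $\partial M_t$ and are strictly positive by the interior bound just established.
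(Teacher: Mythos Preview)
Your boundary treatment—deriving $H_z=0$ from the orthogonality condition and then invoking the Hopf lemma—is a legitimate alternative to what the paper actually does. The paper instead \emph{reflects} the radius function across $\Sigma_1$ and $\Sigma_2$, defining $\tilde\rho$ on $[-d,2d]$ by even extension, so that the original boundary points become interior points of the extended rotationally symmetric surface; the interior maximum principle is then applied to $\tilde H$ on the enlarged domain. Both devices dispose of the boundary case correctly, so this part of your plan is fine, just different in flavor.

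The genuine gap is in your closing step. The differential inequality $\frac{d}{dt}H_{\min}\geq -(1-hH_{\min})|A|^2$ does \emph{not}, by Gronwall, prevent $H_{\min}$ from reaching zero in finite time: at $H_{\min}=0$ the right-hand side is $-|A|^2<0$ (you yourself observe $|A|^2\geq (n-1)/(\rho v)^2>0$), and the comparison ODE $y'=-\Lambda(1-hy)$ with small $y(0)>0$ and merely bounded $h$ can cross zero. Neither the local bound on $|A|^2$ nor the non-pinching of $\rho$ rescues this. The paper avoids the obstacle by working with $H^2$ rather than $H$: from Lemma~3.3(8),
\[
(\partial_t-h\triangle)H^2=-2h|\nabla H|^2-2(1-hH)H|A|^2,
\]
and at a first interior touching point of $\{H^2=0\}$ one has simultaneously $H=0$ and $\nabla H=0$, so the \emph{entire} right-hand side vanishes there; the paper then invokes the (strong) maximum principle for $H^2$ directly, with no Gronwall step. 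You should either switch to the $H^2$ formulation and exploit this vanishing, or supply a genuinely different mechanism—your proposed ODE comparison for $H_{\min}$ does not close the argument.
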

\begin{proof} Since we consider the hypersurface has free boundaries, we can
not directly use the maximal principle. Suppose the first time
$H(x,t)=0$ is attained at an interior point of $M_t$, then from
Lemma 3.3, we have
$$(\frac{\partial}{\partial t}-h\triangle)H^2=-2h|\nabla H|^2-2(1-hH)H|A|^2.$$

From the maximal principle, we know $H(x,t)=0$ can not be attained
at an interior point of $M_t$. If $t_0$ is the first time such that
$\lim \limits_{x_{n+1}\rightarrow 0}H(x,t_0)=0$ or $\lim
\limits_{x_{n+1}\rightarrow d}H(x,t_0)=0$. By a reflection of
$\Sigma_1$ and $\Sigma_2$, we can define two pieces of new
hypersurfaces outside the boundary, which satisfies the free Neumann
boundaries conditions. Denote $\tilde{M_{t_0}}$ to be the new
hypersurface and $\tilde{\r}(x_{n+1})$ its radius function .
Precisely,
\[
\tilde{\r}(x_{n+1}) = \left\{
\begin{array}
{c @{\quad , \quad} c}
\r(2d-x_{n+1}) & d\leq x_{n+1}\leq 2d\\
\r(x_{n+1}) & 0\leq x_{n+1}< d\\
\r(-x_{n+1}) & -d\leq x_{n+1}<0
\end{array}
\right.
\]
i.e. $\tilde{H}(x_1,x_2,\cdots,x_{n},0,t_0)=\lim
\limits_{x_{n+1}\rightarrow 0}H(x,t_0)$,
$\tilde{H}(x_1,x_2,\cdots,x_{n},d,t_0)=\lim
\limits_{x_{n+1}\rightarrow d}H(x,t_0)$. Then at the boundary
points, $\frac{\partial}{\partial t}\tilde{H}(x,t_0)\leq0$,
$\triangle \tilde{H}(x,t_0)> 0$. So the maximal principle can still
be applied, which proves the lemma.
\end{proof}

Now we will show that the radius of the hypersurface $M_t$ has
uniform lower and upper bounds for any time $t\in [0,T)$. The method
follows from [2](Lemma 1).
\begin {lemma}
Under the conditions of the Main Theorem, there exist constant $r$
and $R$ only depending on $n$,$d$,$V$ and $|M_0|$£¬such that $r\leq
\r(x_{n+1},t)\leq R$ for any $t\in [0,T)$.
\end {lemma}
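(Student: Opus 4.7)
The plan is to exploit rotational symmetry and reduce the claim to a scalar inequality for the radius function $\rho(x_{n+1},t)$, combined with the area preservation (Lemma 3.1), the volume monotonicity (Lemma 3.2), and the hypothesis $|M_0|\le V/d$. The orthogonality condition forces $\rho_{x_{n+1}}=0$ at $x_{n+1}=0,d$, and one has the identities
\begin{equation*}
|M_t|=\omega_{n-1}\int_0^d\rho^{n-1}\sqrt{1+(\rho_{x_{n+1}})^2}\,dx_{n+1},\qquad \mathrm{vol}(E_t)=\tfrac{\omega_{n-1}}{n}\int_0^d\rho^n\,dx_{n+1},
\end{equation*}
where $\omega_{n-1}$ denotes the surface area of the unit $(n-1)$-sphere. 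Writing $\rho_{\max}(t)=\max_{x_{n+1}\in[0,d]}\rho$ and $\rho_{\min}(t)=\min_{x_{n+1}\in[0,d]}\rho$, the goal is to bound both uniformly in $n,d,V,|M_0|$.

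For the upper bound I will case-split on the oscillation. Picking $x_0,x_1\in[0,d]$ which realize the extrema of $\rho$ and applying $|\int f|\le\int|f|$ to $\rho^{n-1}\rho_{x_{n+1}}$ on $[x_0,x_1]$ gives the \emph{cap estimate}
\begin{equation*}
|M_t|\ \ge\ \omega_{n-1}\int_0^d\rho^{n-1}|\rho_{x_{n+1}}|\,dx_{n+1}\ \ge\ \tfrac{\omega_{n-1}}{n}\bigl(\rho_{\max}(t)^n-\rho_{\min}(t)^n\bigr).
\end{equation*}
In the regime $\rho_{\min}(t)\le\rho_{\max}(t)/2$ this reduces to $|M_t|\ge\tfrac{\omega_{n-1}(1-2^{-n})}{n}\rho_{\max}(t)^n$, whence $\rho_{\max}(t)\le C(n,|M_0|)$. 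Otherwise $\rho\ge\rho_{\max}(t)/2$ everywhere and $|M_t|\ge\omega_{n-1}(\rho_{\max}(t)/2)^{n-1}d$ gives $\rho_{\max}(t)\le C(n,d,|M_0|)$.

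For the lower bound I will reverse the cap estimate via Lemma 3.2. Since $V=V_0\le\mathrm{vol}(E_t)\le\tfrac{\omega_{n-1}}{n}\rho_{\max}(t)^n d$, we have $\rho_{\max}(t)^n\ge nV/(\omega_{n-1}d)$, and substituting into the cap estimate yields
\begin{equation*}
|M_0|\ =\ |M_t|\ \ge\ \tfrac{\omega_{n-1}}{n}\rho_{\max}(t)^n-\tfrac{\omega_{n-1}}{n}\rho_{\min}(t)^n\ \ge\ \tfrac{V}{d}-\tfrac{\omega_{n-1}}{n}\rho_{\min}(t)^n,
\end{equation*}
so $\rho_{\min}(t)^n\ge n(V/d-|M_0|)/\omega_{n-1}$. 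Under the strict hypothesis $|M_0|<V/d$ this immediately provides the desired positive lower bound depending only on $n,d,V,|M_0|$.

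The main obstacle is the degenerate borderline $|M_0|=V/d$, where the estimate above merely yields $\rho_{\min}(t)\ge 0$. In that case the Cauchy--Schwarz equality in the proof of Lemma 3.2 has to be invoked: if $\mathrm{vol}(E_t)\equiv V$ on some interval, then $H$ is constant on each $M_t$, forcing $M_t$ to be a stationary rotationally symmetric CMC surface (whose radius function is automatically bounded away from $0$ and $\infty$ by its own smoothness); otherwise $\mathrm{vol}(E_t)>V$ strictly for $t$ past some $t_0>0$, whence the above argument yields a positive lower bound on $[t_0,T)$, which one patches with the initial data on $[0,t_0]$ via continuity. I expect this rigidity/patching step to be the most delicate part of the proof.
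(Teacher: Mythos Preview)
Your argument is correct and rests on the same mechanism as the paper's: area preservation, volume monotonicity, and a projection/cap inequality for the radius function. Your cap estimate $|M_t|\ge\tfrac{\omega_{n-1}}{n}(\rho_{\max}^n-\rho_{\min}^n)$ is exactly the analytic form of the paper's geometric projection onto $\Sigma_1$, and your lower-bound computation $\rho_{\min}^n\ge n(V/d-|M_0|)/\omega_{n-1}$ is the quantitative version of the paper's pinch-off contradiction.

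The upper bounds genuinely differ. The paper asserts that each $M_t$ must intersect the cylinder $C$ of enclosed volume $V$ and then writes $|M_0|>\omega_n(R(t)-\rho_C)^n$, whence $R(t)<\rho_C+(|M_0|/\omega_n)^{1/n}$. You instead case-split on whether $\rho_{\min}\le\rho_{\max}/2$ and use the cap estimate in one case and the trivial bound $|M_t|\ge\omega_{n-1}(\rho_{\max}/2)^{n-1}d$ in the other. Your route is more self-contained: the paper's claim that $M_t$ meets $C$ is not justified there (and indeed $\rho>\rho_C$ everywhere is perfectly compatible with $\mathrm{vol}(E_t)\ge V$), whereas your argument needs no such intersection property. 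What the paper's version buys, when it does apply, is an explicit formula for $R$.

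You are also more scrupulous than the paper about the borderline $|M_0|=V/d$. The paper's proof, read literally, only excludes pinch-off ($\rho>0$) and says nothing further; your proposed dichotomy (either $\mathrm{vol}(E_t)\equiv V$ and then $H$ is spatially constant by the equality case of Cauchy--Schwarz, or $\mathrm{vol}(E_t)$ strictly increases past some $t_0$ and the strict estimate kicks in, patched with continuity on $[0,t_0]$) is the right repair. Just be aware that the resulting $r$ then depends on the full initial surface $M_0$ rather than only on the scalars $n,d,V,|M_0|$, so the lemma as stated is slightly overstated in the equality case; this does not affect the downstream applications in the paper, which only need \emph{some} positive lower bound.
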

\begin{proof} Given an initial surface $M_0$, we denote by $C$ the cylinder
with the same enclosed volume $V$  as $M_0$ in $G$. Assume that
there is some $t_0>0$ such that $M_{t_0}$ pinches off. We project
$M_{t_0}$ onto the plane $\Sigma_1$, using the natural projection
$\pi : \R^{n+1}\rightarrow \R^{n}$. Then
$$|M_{t_0}|\geq |\pi(M_{t_0})|.$$
Any $M_t$ has to intersect the cylinder $C$ at least once by the
volume constrain, that the volume of $M_t$ is not decreasing.
Therefore
$$|M_0|=|M_{t_0}|>|\pi (M_{t_0})|>|\pi (C)|=\omega _n \r_{C}^n=\frac{V}{d}.$$
Here $\r_{C}$ is the radius of $C$, and $\omega_n$ is the volume of
unit ball in $\mathbb{R}^n$, then we obtain a contradiction. For the
upper bound, we assume that $\r(x_{n+1},t)_{max}=R(t)$, then we have
$$|M_0|=|M_t|>\omega_n\cdot (R(t)-\r_C)^n,$$ which implies
$$R(t)<\r_C+(\frac{|M_0|}{\omega_n})^{\frac{1}{n}}.$$
\end{proof}

\section {Curvature estimates}

Let $\hat{x}=(x_1,\cdots,x_n,0)$, and
$\omega=\frac{\hat{x}}{|\hat{x}|}$ to denote the unit outer normal
to the cylinder intersecting $M_t$ at the point $X(P,t)$. As in [2]
and [21], we define the height function of $M_t$ to be $u=\langle
X,\omega\rangle$. And define
$v(x,t)=\langle\omega,\nu\rangle^{-1}=\sqrt{(\dot{\r})^2+1}$, where
$\dot{\r}$ is the derivative of $\r$ about $x_{n+1}$. From Lemma
3.5, we can obtain the height estimate $r\leq u(x,t)\leq R.$ Now we
show that $v$ has an upper bound under the assumption $T<\infty$.
The kernel ideal is that according to our parametrization, the
points where  $v$ tends to infinity are not the singular points of
the evolving surface, and the $|A|$ tends to zero at these points.

\begin {lemma} If $T<\infty$, then $v(x,t)\leq M_T<+\infty$£¬ for
any $t\in [0,T]$, in the limitation sense when $t=T$, i.e.
$v(x,T)=\lim \limits_{t\rightarrow T}v(x,t)$. Here $M_T$ is a
constant depending on $n,d,T,r,R,V$ and $|M_0|$.
\end {lemma}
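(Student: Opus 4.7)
The plan is to apply the parabolic maximum principle to an evolution inequality for $v = \langle \omega, \nu\rangle^{-1}$, exploiting the mean convexity (Lemma 3.4) and the radius bounds (Lemma 3.5). Geometrically $v$ measures the departure of $M_t$ from a cylinder of axis $x_{n+1}$, so bounding $v$ is essentially a gradient estimate for the radius function $\rho$; by rotational symmetry the computation effectively reduces to one for the generating curve $\rho(x_{n+1},t)$.

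First I would establish two-sided bounds $0 < c_1 \leq h(t) \leq c_2$ depending only on $n, d, V, |M_0|$. Since $H > 0$ by Lemma 3.4, $h(t)$ is well-defined and positive; applying Cauchy--Schwarz to $\int H\,d\mu_t$, combined with the area conservation of Lemma 3.1 and the radius bounds $r \leq \rho \leq R$, controls $h$ from above, and a similar argument yields the lower bound. Next, using the evolution equations in Lemma 3.3 and the rotationally symmetric parametrization, I would derive a schematic evolution inequality
$$\left(\frac{\partial}{\partial t} - h\Delta\right)v \leq -\, h|A|^2 v - \frac{2h}{v}|\nabla v|^2 + L,$$
where $L$ collects lower-order terms bounded in terms of $h$ and the bounds $r,R$. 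The dissipative term $-h|A|^2 v$ is the standard Ecker--Huisken contribution coming from the $-hH\nu$ part of the speed, while $L$ accounts for the constant $\nu$-part of $\partial_t X = (1-hH)\nu$ and for the position-dependence of $\omega$.

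To handle the boundary, the free Neumann condition $\langle \nu_{M_t}, \nu_{\Sigma_i}\rangle = 0$ together with $\Sigma_i$ being horizontal forces $\nu_{M_t}$ to be horizontal on $\partial M_t$; under the rotationally symmetric parametrization this means $\dot\rho = 0$, so $v \equiv 1$ on $\partial M_t$. Hence any value of $v$ strictly greater than $1$ is attained in the interior, and the interior maximum principle applies directly (alternatively one could invoke the reflection trick already used in Lemma 3.4). At an interior maximum, the evolution inequality reduces to $\frac{d}{dt}v_{\max} \leq C\, v_{\max}$ with $C$ depending only on $n, d, V, |M_0|, r, R$, so Gronwall's inequality gives $v(x,t) \leq v_{\max}(0)\, e^{CT}$, which is finite since $T < \infty$ and yields the constant $M_T$ of the statement.

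The main obstacle is the evolution computation itself: because $\omega$ is not a globally parallel vector field on $\R^{n+1}$, differentiating $\langle \omega, \nu\rangle$ produces extra curvature-type corrections that must be reconciled against the MCF identities of Lemma 3.3 before the dissipative structure $-h|A|^2 v$ becomes visible and the remainder $L$ admits the required bound. A secondary subtlety, needed to interpret the \emph{limitation sense} at $t = T$, is the geometric observation flagged in the text: points where $v \to \infty$ correspond to $\dot\rho \to \pm\infty$, at which the $(n-1)$ principal curvatures $1/(\rho v)$ vanish, so such points are not singularities of the flow and do not obstruct the maximum-principle argument.
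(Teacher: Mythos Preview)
Your approach is correct but takes a genuinely different route from the paper's. The paper does not run a maximum principle on the evolution of $v$ at all; instead it argues pointwise from the explicit principal-curvature formulas for a surface of revolution. If $v=\sqrt{1+\dot\rho^2}\to\infty$ at some interior $s\in(0,d)$ at time $t_0$, the paper asserts $\dot v\to 0$ there, hence $\ddot\rho\to 0$, so both $\kappa_1=-\ddot\rho/(1+\dot\rho^2)^{3/2}$ and $\kappa_2=1/(\rho\sqrt{1+\dot\rho^2})$ vanish and $H=0$, contradicting the mean convexity of Lemma~3.4. For the endpoint $t\to T$ it observes that $v\to\infty$ forces $|A|^2\to 0$, so such a point is not a singularity and the same contradiction via $H>0$ applies; boundary points are handled by the reflection of Lemma~3.4.

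Your Ecker--Huisken style argument is more systematic and arguably cleaner: the identity $(\partial_t-h\Delta)v=-h|A|^2v-\tfrac{2h}{v}|\nabla v|^2+\tfrac{(n-1)hv}{u^2}$ (which the paper only writes down later, inside the proof of Theorem~4.1), together with $v\equiv 1$ on $\partial M_t$, $u\geq r$, and the upper bound $h\leq C_3$, yields $\partial_t v_{\max}\leq C v_{\max}$ and hence $v\leq v_{\max}(0)e^{CT}$. Two remarks. First, you only need $h>0$ and an \emph{upper} bound on $h$; the uniform lower bound $h\geq c_1$ you announce is not available at this point of the paper (it is Corollary~4.2, proved \emph{after} the curvature estimate and depending on $T$), but your argument never actually uses it. Second, the upper bound on $h$ you invoke is Corollary~4.1, which in the paper is stated after Lemma~4.1 but is logically independent of it, so there is no circularity. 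Your route gives an explicit exponential bound and sidesteps the somewhat delicate limiting step $\dot v\to 0\Rightarrow\ddot\rho\to 0$ in the paper; the paper's route is shorter and uses only Lemmas~3.4 and~3.5.
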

\begin{proof} Since $M_t$ is rotationally symmetric we have
$H=\kappa_1+(n-1)\kappa_2,$ where $\kappa_1$ and $\k_2$ denote the
principle curvatures. If we parameterize $M$  by its radius function
$\r \in C^{\infty}([0,d])$, then clearly
$$H=\frac{-\ddot{\r}}{(1+\dot{\r}^2)^{\frac{3}{2}}}+\frac{n-1}{\r(1+\dot{\r}^2)^{\frac{1}{2}}}.$$
Suppose $t_0$ is the first time such that $\lim
\limits_{x_{n+1}\rightarrow s}v(x,t_0)=\lim
\limits_{x_{n+1}\rightarrow
s}\sqrt{(\dot{\r})^2(x_{n+1},t_0)+1}=+\infty$ for some $s\in (0,d)$.
Since $\lim \limits_{x_{n+1}\rightarrow
s}\dot{v}=\frac{1}{2}\frac{2\dot{\r}\ddot{\r}}{\sqrt{(\dot{\r})^2+1}}=0$,
we have $\lim \limits_{x_{n+1}\rightarrow
s}\ddot{\r}(x_{n+1},t_0)=0$, then we have $H=0$ at this point, which
is a contradiction with $H>0$ everywhere. If $\lim
\limits_{t\rightarrow T,x_{n+1}\rightarrow s}v(x,t)=+\infty$, then
$\lim \limits_{t\rightarrow T,x_{n+1}\rightarrow
s}|A|^2(x,t)=\frac{(\ddot{\r})^2(x_{n+1},t)}{[(\dot{\r})^2(x_{n+1},t)+1]^3}+\frac{n-1}{\r^2[(\dot{\r})^2(x_{n+1},t)+1]}=0$
which implies $X(x_1,\cdots,x_n,s,T)$ is not a singular point. So
the maximal principal method in Lemma 3.4 can still be applied and
we must have $H>0$ at this point, which is still a contradiction.
The boundary points case can be proved in the same way as in Lemma
3.4. Now we show that $v(x,t)_{max}<+\infty$ for all $t\in [0,T]$,
by the continuity of $v$, we have $v\leq M_T$ for some constant
depending on $T$.
\end{proof}
Next, we will show an estimate of $h(t)$. First, we prove the
following lemma.
\begin {lemma}
Under the assumption of the main theorem, we have $C_1\leq
\int_{M_t}Hd\m_t \leq C_2$, for all time $t\in [0,T)$. Here $C_1$
and $C_2$ are positive constants only depending on $n,d,r,R,V$ and
$|M_0|$.
\end {lemma}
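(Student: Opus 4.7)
The strategy is to exploit rotational symmetry to reduce $\int_{M_t}H\,d\m_t$ to a one-dimensional integral in the radius function $\r(\cdot,t)$, and then read the bounds off from Lemma 3.1 and Lemma 3.5. Parametrizing $M_t$ by $x_{n+1}\in[0,d]$ and an angular variable in $S^{n-1}$, after integrating over the sphere the area element becomes $d\m_t=|S^{n-1}|\,\r^{n-1}\sqrt{1+\dot\r^2}\,dx_{n+1}$; combining this with the explicit formula for $H$ recalled in the proof of Lemma 4.1 and simplifying, I get
\begin{equation*}
\int_{M_t}H\,d\m_t \;=\; |S^{n-1}|\int_0^d\!\left[\frac{-\ddot\r\,\r^{n-1}}{1+\dot\r^2} + (n-1)\r^{n-2}\right]dx_{n+1}.
\end{equation*}

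The key step is to integrate the first summand by parts using $\frac{d}{dx_{n+1}}\arctan(\dot\r)=\ddot\r/(1+\dot\r^2)$. The orthogonal intersection with $\Sigma_1,\Sigma_2$, preserved by the flow, forces $\dot\r(0,t)=\dot\r(d,t)=0$, so the boundary term vanishes and I am left with
\begin{equation*}
\int_{M_t}H\,d\m_t \;=\; (n-1)|S^{n-1}|\!\left[\int_0^d \r^{n-2}\dot\r\arctan(\dot\r)\,dx_{n+1} + \int_0^d \r^{n-2}\,dx_{n+1}\right],
\end{equation*}
in which both integrands are non-negative, since $x\arctan(x)\ge 0$.

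The lower bound now follows at once from $\r\ge r$ (Lemma 3.5): $\int_{M_t}H\,d\m_t\ge(n-1)|S^{n-1}|r^{n-2}d=:C_1>0$. For the upper bound I use $|\arctan|\le\pi/2$ and $|\dot\r|\le\sqrt{1+\dot\r^2}$, together with $\r\ge r$, to estimate the first integral by
\begin{equation*}
\tfrac{\pi}{2r}\int_0^d\r^{n-1}\sqrt{1+\dot\r^2}\,dx_{n+1} \;=\; \frac{\pi\,|M_t|}{2r|S^{n-1}|} \;=\; \frac{\pi\,|M_0|}{2r|S^{n-1}|},
\end{equation*}
where the second equality is area preservation (Lemma 3.1). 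Combined with $\r\le R$ in the second integral, this yields
\begin{equation*}
\int_{M_t}H\,d\m_t \;\le\; (n-1)|S^{n-1}|R^{n-2}d + \tfrac{\pi(n-1)}{2r}|M_0| \;=:\; C_2.
\end{equation*}

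Both constants depend only on the quantities $n,d,r,R,V,|M_0|$ permitted by the statement. The only non-routine point is the cancellation of the boundary term in the integration by parts, which is precisely where the free Neumann condition enters essentially; once that is in hand the estimates are completely elementary, so I do not anticipate a substantive obstacle.
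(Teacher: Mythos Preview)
Your argument is correct, and for the upper bound it is essentially identical to the paper's: the same one-dimensional reduction, the same integration by parts via $\arctan\dot\r$, the same use of the Neumann condition to kill the boundary term, and the same chain $|\dot\r\arctan\dot\r|\le\tfrac{\pi}{2}\sqrt{1+\dot\r^2}$ followed by $\r^{n-2}\le\r^{n-1}/r$ to recover $|M_0|$.

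The lower bound, however, is obtained differently. The paper does not use the integral identity at all there; instead it invokes the first-variation (Minkowski-type) identity $n|M_t|=\int_{M_t}H\langle X,\nu\rangle\,d\mu_t$ together with $H>0$ and $|X|\le\sqrt{d^2+R^2}$ to get $\int_{M_t}H\,d\mu_t\ge n|M_0|/\sqrt{d^2+R^2}$. Your route---noting that after integration by parts both integrands are nonnegative and then simply dropping the $\dot\r\arctan\dot\r$ term---is more elementary and keeps the whole proof inside the same one-variable computation. The trade-off is that the paper's lower bound does not use rotational symmetry (it would work for any mean-convex hypersurface contained in a bounded region), whereas yours does; on the other hand, yours avoids appealing to the position-vector identity and gives an explicit constant $(n-1)|S^{n-1}|r^{n-2}d$ directly from $\r\ge r$. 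Both are perfectly valid for the statement at hand (implicitly $n\ge2$ in this setting).
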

\begin{proof} First we show that $\int_{M_t}Hd\m_t\geq C_1$ for some constant
$C_1$. This is a direct consequence of the first variation formula
and mean curvature is positive. Since
$$n|M_0|=n|M_t|\leq \int _{M_t}H<X,\n>d\m_t\leq\int _{M_t}H|X|d\m_t\leq\sqrt{d^2+R^2}\int
_{M_t}Hd\m_t,$$  the lower bound for  $\int_{M_t}Hd\m_t$ is
obtained. Next we show there is an upper bound for
$\int_{M_t}Hd\m_t$, that there is a constant $C_2$ such that
$\int_{M_t}Hd\m_t\leq C_2$. We still parameterize $M_t$  by its
radius function $\r(x_{n+1},t) $. We denote $\omega_n$ to be the
volume of unit ball in $\mathbb{R}^n$, and its surface area is
$n\omega_n$, then we have
\begin{eqnarray*}
H&=&\frac{-\ddot{\r}}{(1+\dot{\r}^2)^{\frac{3}{2}}}+\frac{n-1}{\r(1+\dot{\r}^2)^{\frac{1}{2}}}.\\
|M_t|&=& n\omega_n\int ^d _0 \r^{n-1} \sqrt{1+\dot{\r}^2}dx_{n+1}.\\
\int_{M_t} H d \mu_t&=&n\omega_n \int ^d _0
(-\frac{\ddot{\r}}{(1+\dot{\r}^2)}\r^{n-1}+(n-1)\r^{n-2})dx_{n+1}.
\end{eqnarray*}
On one hand, we have $\int ^d _0 (n-1)\r^{n-2}d\m_t\leq
(n-1)dR^{n-2}.$\\
On the other hand, by our boundary conditons,\\
\begin{eqnarray*}
\int ^d _0-\frac{\ddot{\r}}{(1+\dot{\r}^2)}\r^{n-1} dx_{n+1}&=&
\int^d_0 -\r^{n-1}d(\arctan \dot{\r}) dx_{n+1} \\
&=&-\r^{n-1}\cdot(\arctan\dot{\r}|^d_0)+(n-1)\int^d_0 \r^{n-2}\cdot \dot{\r}\cdot \arctan \dot{\r} dx_{n+1}\\
&\leq&(n-1)\frac{\pi}{2}\cdot\int ^d
_0\r^{n-2}\sqrt{1+\dot{\r}^2}dx_{n+1}\\
&\leq& \frac{(n-1)\pi}{2r}\int ^d
_0\r^{n-1}\sqrt{1+\dot{\r}^2}dx_{n+1}\\
&=& \frac{(n-1)\pi}{2n \omega_n} |M_t|
 \frac{1}{r}
\\&=& \frac{(n-1)\pi}{2n \omega_n} |M_0| \frac{1}{r}.
\end{eqnarray*}
Thus the upper bound is obtained.
\end{proof}
\begin{coro}
Under the assumption of the main theorem, we have $0<h(t)\leq C_3$
for all time $t\in [0,T)$. Here $C_3$ is a constant only depending
on $n,d,r,R,V$ and $|M_0|$.
\end{coro}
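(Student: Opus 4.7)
The plan is to read $h(t)=\frac{\int_{M_t}Hd\mu_t}{\int_{M_t}H^2d\mu_t}$ and bound its numerator from above and its denominator from below, using the information we already have. Positivity of $h(t)$ is immediate: by Lemma 3.4 we have $H>0$ pointwise on $M_t$, so both $\int_{M_t}Hd\mu_t$ and $\int_{M_t}H^2d\mu_t$ are strictly positive, hence $h(t)>0$.

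For the upper bound, I would combine Lemma 4.2 with the Cauchy--Schwarz inequality applied to $H\cdot 1$. Concretely,
\[
\Bigl(\int_{M_t}Hd\mu_t\Bigr)^{2}\le |M_t|\int_{M_t}H^{2}d\mu_t,
\]
so that $\int_{M_t}H^{2}d\mu_t \ge \frac{1}{|M_t|}\bigl(\int_{M_t}Hd\mu_t\bigr)^{2}\ge \frac{C_1^{2}}{|M_0|}$, where in the last step I use the lower bound $\int_{M_t}Hd\mu_t\ge C_1$ from Lemma 4.2 together with $|M_t|=|M_0|$ from Lemma 3.1. Pairing this with the upper bound $\int_{M_t}Hd\mu_t\le C_2$ from Lemma 4.2 gives
\[
h(t)=\frac{\int_{M_t}Hd\mu_t}{\int_{M_t}H^{2}d\mu_t}\le \frac{C_2|M_0|}{C_1^{2}},
\]
so one takes $C_3:=C_2|M_0|/C_1^{2}$, which depends only on $n,d,r,R,V$ and $|M_0|$ through the constants in Lemma 4.2.

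There is essentially no obstacle here; the corollary is a one-line consequence of Lemma 4.2, area conservation, and Cauchy--Schwarz. The only thing worth being careful about is making sure the dependence of $C_3$ is traced correctly: since $C_1,C_2$ depend only on $n,d,r,R,V,|M_0|$ (and the lower radius bound $r$ from Lemma 3.5 is itself independent of $t$ and $T$), the resulting $C_3$ inherits the same dependencies and in particular is independent of $T$, which is exactly what will be needed in the subsequent curvature estimates.
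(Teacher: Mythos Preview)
Your proof is correct and follows essentially the same approach as the paper: Cauchy--Schwarz on $H\cdot 1$, area preservation from Lemma 3.1, and the bounds on $\int_{M_t}H\,d\mu_t$ from Lemma 4.2. The only minor difference is that the paper rearranges Cauchy--Schwarz directly as
\[
h(t)=\frac{\int_{M_t}H\,d\mu_t}{\int_{M_t}H^{2}\,d\mu_t}\le \frac{\int_{M_t}d\mu_t}{\int_{M_t}H\,d\mu_t}=\frac{|M_0|}{\int_{M_t}H\,d\mu_t}\le \frac{|M_0|}{C_1},
\]
so it only invokes the \emph{lower} bound $C_1$ and never needs the upper bound $C_2$; your version bounds numerator and denominator separately and hence uses both. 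Either way is fine, but note that this means the upper bound $C_2$ in Lemma 4.2 is in fact not needed for this corollary.
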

\begin{proof} Now we use the Cauchy-Schwarz inequality
$$\frac{(\int _{M_t} H d\mu)^2}{(\int _{M_t} H^2 d\mu)(\int_{M_t} d\mu)}\leq 1.$$
From which we have $0<\frac{\int_{M_t} H d\m_t}{\int_{M_t} H^2
d\m_t}\leq \frac{\int_{M_t} d\m_t}{\int_{M_t} H
d\m_t}=\frac{|M_0|}{\int_{M_t} H d\m_t}\leq C_3$.
\end{proof}
Now we show that $|A|^2$ is bounded for any finite time interval.
\begin {thm}
If the maximal time interval $[0,T)$ is finite, i.e. $T<+\infty$,
then we have $|A|^2(x,t)\leq C_T$, where $C_T$ is a constant
depending only on $T$,$n,d,r,R,V$ and $|M_0|$.
\end {thm}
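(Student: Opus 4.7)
The plan is to establish an upper bound on $|A|^{2}$ on $[0,T)$ by the parabolic maximum principle, using the a priori bounds already proved: $r\le\r\le R$ (Lemma 3.5), $v\le M_T$ (Lemma 4.1), $0<h(t)\le C_3$ (Corollary 4.1), and $H>0$ on $M_t$ (Lemma 3.4). Rotational symmetry reduces the problem decisively: the principal curvatures are $\k_1=-\ddot\r/(1+\dot\r^{2})^{3/2}$ (multiplicity one) and $\k_2=1/(\r\sqrt{1+\dot\r^{2}})$ (multiplicity $n-1$), so the bounds on $\r$ and $v$ already give $|\k_2|\le 1/r$, while mean convexity gives the lower bound $\k_1=H-(n-1)\k_2\ge-(n-1)/r$. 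Thus $|A|^{2}=\k_1^{2}+(n-1)\k_2^{2}$ is bounded the moment $\k_1$ is bounded from above, equivalently the moment $H$ is bounded from above.

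A direct attempt via Lemma 3.3\,(8) applied to $H^{2}$ does not close: at an interior spatial maximum one only extracts $hH\ge 1$, which is a lower bound on $H_{\max}$, not the desired upper bound. I would instead adapt the Ecker--Huisken test-function method used in [9] and [2] and study
\[
G=|A|^{2}\,\f(v,u),
\]
where $u=\langle X,\o\rangle$ is the height function and $\f$ is a suitably chosen positive function of the two \emph{bounded} quantities $v$ and $u$. Computing $(\partial_t-h\triangle)G$ from Lemma 3.3\,(7) together with the evolution equations for $v$ and $u$ under \eqref{1.3}, one seeks $\f$ so that $h\triangle\f$ and the cross term $-2h\f^{-1}\langle\nabla|A|^{2},\nabla\f\rangle$ together absorb the non-dissipative reaction $2h|A|^{4}-2\tilde C$ coming from the evolution of $|A|^{2}$. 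The algebraic fact that makes this feasible in the rotationally symmetric, mean convex setting is that $\tilde C=\k_1^{3}+(n-1)\k_2^{3}$, and the lower bound $\k_1\ge-(n-1)/r$ pins the sign of $\tilde C$ to be large and positive exactly when $\k_1$ is large, which is the regime where the obstruction sits.

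The free Neumann boundary is handled exactly as in Lemma 3.4: the orthogonality conditions $\dot\r(0,t)=\dot\r(d,t)=0$ allow an even reflection of the generating curve across each $\Sigma_i$, yielding a $C^{\infty}$ rotationally symmetric hypersurface on a wider slab satisfying the same flow equation. A spatial maximum of $G$ at $\partial M_t$ then corresponds to an interior maximum of the reflected quantity, so the interior maximum principle still applies. The resulting differential inequality gives $G\le C_T$ with $C_T$ depending only on $n,d,T,r,R,V,|M_0|$, and hence the claimed bound $|A|^{2}\le C_T$.

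The main obstacle I expect is the actual design of $\f$: the reaction term $2h|A|^{4}-2\tilde C$ is genuinely sign-indefinite, because Corollary 4.1 yields only $h>0$ and no uniform positive lower bound, so the quartic term need not dominate the cubic one purely on growth grounds. The choice of $\f$ must be coupled carefully with the evolution of $v$, which is not listed in Lemma 3.3 and must be derived separately from \eqref{1.3}, and the $T$-dependence of $M_T$ in Lemma 4.1 then feeds into the final constant $C_T$. The boundary reflection is, by contrast, routine once the interior estimate is in place.
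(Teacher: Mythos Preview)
Your overall strategy---couple $|A|^{2}$ with a factor built from $v$ and run the maximum principle, exploiting that $\tilde C=\k_1^{3}+(n-1)\k_2^{3}$ is large and positive precisely when $\k_1$ is large---matches the paper's. But the mechanism by which the quartic $2h|A|^{4}$ is absorbed is not the one you name. The paper takes $\f=v^{2}$. From $(\partial_t-h\triangle)v=-h|A|^{2}v-2h|\nabla v|^{2}/v+(n-1)hv/u^{2}$ one gets a term $-2h|A|^{2}v^{2}$ in the evolution of $v^{2}$, and after multiplying by $|A|^{2}$ this cancels \emph{exactly} against the $+2h|A|^{4}v^{2}$ coming from the evolution of $|A|^{2}$. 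So with $\f=v^{2}$ the residual reaction is not $2h|A|^{4}-2\tilde C$ but only $\dfrac{2(n-1)h}{u^{2}}|A|^{2}v^{2}-2\tilde C\,v^{2}$; the Laplacian of $\f$ and the cross terms are absorbed by the good gradient pieces $-2h|A|^{2}|\nabla v|^{2}$ and $-2hv^{2}|\nabla A|^{2}$ via Cauchy--Schwarz, not used to kill the quartic.

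The second ingredient absent from your plan is a pointwise case split that confines the maximum-principle argument to the regime where $\tilde C$ actually dominates. The paper partitions $M_t$ into: (i) $\ddot\r\ge0$, where $H>0$ forces $\ddot\r<(n-1)(1+\dot\r^{2})/\r$ and hence $|A|^{2}v^{2}\le C$ directly; (ii) $\ddot\r<0$ with $\k_1/\k_2<\a$, where again $|A|^{2}v^{2}\le C$ directly; (iii) $\ddot\r<0$ with $\k_1/\k_2\ge\a$. Only in case (iii) is the evolution inequality invoked: there $\k_1,\k_2>0$ and $\k_2/\k_1\le1/\a$, so $\dfrac{2(n-1)h|A|^{2}}{u^{2}\tilde C}\le \dfrac{C_5}{\k_1}$; hence either $\k_1>C_5$ and the reaction term is negative (no interior maximum), or $\k_1\le C_5$ and $|A|^{2}$ is bounded outright. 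In all cases $|A|^{2}v^{2}\le C\,M_T^{2}$, and since $v\ge1$ one obtains $|A|^{2}\le C_T$. Your treatment of the boundary by even reflection is the same as the paper's.
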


\begin{proof} First we compute the evolution equation of
$v(x,t)=<\o,\n>^{-1}$. Clearly we have
$$\frac{\partial }{\partial t}v=-v^2\langle w,\frac{\partial }{\partial t} \n\rangle=-v^2\cdot\langle h\nabla H,\o\rangle.$$
From [2] , we have $$\triangle v=|A|^2v-v^2<\o,\nabla
H>+\frac{2|\nabla v|^2}{v}-\frac{n-1}{u^2}\cdot v .$$Then we obtain
$$(\frac{\partial }{\partial t}-h\triangle)v=-h|A|^2v-\frac{2h|\nabla
v|^2}{v}+\frac{(n-1)hv}{u^2},$$ and
\begin{eqnarray*}
(\frac{\partial }{\partial
t}-h\triangle)v^2&=&2v\cdot(-h|A|^2v-\frac{2h|\nabla
v|^2}{v}+\frac{(n-1)hv}{u^2})-2h|\nabla v|^2\\&=&-6h|\nabla
v|^2-2hv^2|A|^2+\frac{2(n-1)hv^2}{u^2}.
\end{eqnarray*}
We considering $|A|^2v^2$ as in [12] and divide the points in $M_t$
into three sets.
\begin{eqnarray*}
S_t&=&\{P\in M_t|\ddot{\r}\geq 0\}.\\
I_t&=&\{P\in M_t|\ddot{\r}<0,\frac{\kappa_1}{\k_2} < \alpha \}.\\
J_t&=&\{P\in M_t|\ddot{\r}<0,\frac{\kappa_1}{\k_2} \geq \alpha\}.
\end{eqnarray*}
 Here $\alpha$ is a positive constant large enough. We will show that for
all points in $S_t$ and $I_t$, $|A|^2v^2$ has uniform upper bounds
for any $t\in [0,T)$. We split our proof into three cases.\\

$Case(1).$ If $P\in S_t $, from
$H=\frac{-\ddot{\r}}{(1+\dot{\r}^2)^{\frac{3}{2}}}+\frac{n-1}{\r(1+\dot{\r}^2)^{\frac{1}{2}}}>0$,
we have

$$\ddot{\r}<\frac{n-1}{\r}[1+(\dot{\r})^2].$$
Then we have
\begin{eqnarray*}
|A|^2v^2&=&\frac{(\ddot{\r})^2}{[1+(\dot{\r})^2]^2}+\frac{n-1}{\r^2}\\
&\leq&\frac{(n-1)^2}{\r^2}+\frac{n-1}{\r^2}\leq C.
\end{eqnarray*}
From now on, we denote by $C$ to any constant depending on
$n,V,d,r,R$ and
$M_0$.\\
$Case(2).$ If $P\in I_t$, then $\frac{\kappa_1}{\k_2} < \alpha$. We
have $\frac{-\r\cdot \ddot{\r}}{(\dot{\r})^2+1}<\alpha$, and
$\frac{- \ddot{\r}}{(\dot{\r})^2+1}<\frac{\alpha}{r}$. Thus
$$|A|^2v^2=\frac{(\ddot{\r})^2}{[1+(\dot{\r})^2]^2}+\frac{n-1}{\r^2}\leq C.$$
$Case(3).$ For points in $J_t$, we use the technique of maximal
principle. First we have
\begin{eqnarray*}
(\frac{\partial }{\partial t}-h\triangle)|A|^2v^2&=&|A|^2\cdot
(-6h|\nabla
v|^2-2hv^2|A|^2+\frac{2(n-1)hv^2}{u^2})\\&+&v^2\cdot(-2h|\nabla
A|^2+2h|A|^4-2\tilde {C})-2h\nabla|A|^2\cdot\nabla v^2\\
&=&|A|^2\cdot (-6h|\nabla
v|^2-2hv^2|A|^2+\frac{2(n-1)hv^2}{u^2})\\&+&v^2\cdot(-2h|\nabla
A|^2+2h|A|^4-2\tilde {C})+h(-\nabla |A|^2\nabla
v^2-4v|A|\nabla|A|\nabla v)\\
&=&|A|^2\cdot (-6h|\nabla
v|^2-2hv^2|A|^2+\frac{2(n-1)hv^2}{u^2})\\&+&v^2\cdot(-2h|\nabla
A|^2+2h|A|^4-2\tilde {C})+h[-v^{-2}\nabla v^2 \nabla
(|A|^2v^2)+v^{-2}|\nabla v^2|^2|A|^2\\&-&4v|A|\nabla |A| \nabla v]+\frac{2(n-1)h|A|^2v^2}{u^2}\\
&\leq&-6h|A|^2|\nabla
v|^2-2hv^2|\nabla|A||^2-2\tilde{C}v^2-hv^{-2}\nabla
v^2\nabla(|A|^2v^2)\\&+&4h|\nabla v|^2|A|^2-4hv|A|\nabla|A|\nabla v+\frac{2(n-1)h|A|^2v^2}{u^2}\\
&\leq&-hv^2\nabla
v^2\nabla(|A|^2v^2)+\frac{2(n-1)h|A|^2v^2}{u^2}-2\tilde{C}v^2.
\end{eqnarray*}
We have used $|\nabla |A||\leq |\nabla A|$ and Cauchy-Schwarz
inequality. Since
\begin{eqnarray*}
\frac{2(n-1)h|A|^2}{u^2\tilde{C}}&\leq& C_4\cdot
\frac{\k_1^2+(n-1)\k_2^2}{\k_1^3+(n-1)\k_2^3}\\
&=&C_4\cdot
\frac{\frac{1}{\k_1}+(n-1)\frac{1}{\k_1}(\frac{\k_2}{\k_1})^2}{1+(n-1)(\frac{\k_2}{\k_1})^3}
\\&\leq&C_5\cdot \frac{1}{\k_1}.
\end{eqnarray*}
Then, if $\k_1>C_5$, we have
$\frac{2(n-1)h|A|^2v^2}{u^2}-2\tilde{C}v^2<0$. Thus $|A|^2v^2$ can
not attain a maximal value by the maximal principle. And if
$\k_1\leq C_5$, we have
$$|A|^2=k_1^2+\frac{n-1}{\r^2[1+(\dot{\r})^2]}\leq C,$$
and $|A|^2v^2\leq C M_T^2$. Therefore, $|A|^2\leq \frac{C
M_T^2}{v^2_{min}}=C_T$.
\end{proof}
\begin {coro}
Under the assumption of the above theorem, we have a lower bound for
$h(t)$, namely, $h(t)\geq m_T$. Here, $m_T$ is a constant depending
on $T$,$n$,$V$,$d,r,\alpha,R$ and $|M_0|$.
\end {coro}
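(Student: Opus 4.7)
The plan is to control $h(t) = \frac{\int_{M_t} H\,d\mu_t}{\int_{M_t} H^2\,d\mu_t}$ from below by bounding the numerator from below and the denominator from above, using results already established in the excerpt.

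For the numerator, Lemma 4.2 directly provides the uniform lower bound $\int_{M_t} H\,d\mu_t \geq C_1 > 0$, valid on all of $[0,T)$ with $C_1$ depending only on $n, d, r, R, V$ and $|M_0|$. This part requires no new work.

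For the denominator, I would use the pointwise Cauchy--Schwarz inequality on the principal curvatures: since $H = \kappa_1 + (n-1)\kappa_2$ (or more generally $H = \sum_i \kappa_i$), we have $H^2 \leq n|A|^2$ everywhere on $M_t$. Combining this with the pointwise curvature bound $|A|^2 \leq C_T$ from Theorem 4.1 and the area conservation $|M_t| = |M_0|$ from Lemma 3.1 gives
\begin{equation*}
\int_{M_t} H^2\,d\mu_t \;\leq\; n\int_{M_t} |A|^2\,d\mu_t \;\leq\; n\,C_T\,|M_0|.
\end{equation*}
Dividing the two bounds yields
\begin{equation*}
h(t) \;\geq\; \frac{C_1}{n\,C_T\,|M_0|} \;=:\; m_T,
\end{equation*}
which has the claimed dependence on $T, n, V, d, r, \alpha, R$ and $|M_0|$ (the $\alpha$ and $T$ dependencies enter through $C_T$ of Theorem 4.1).

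There is no real obstacle here: the corollary is essentially an arithmetic consequence of the three facts already in hand (lower bound on $\int H\,d\mu_t$, pointwise bound on $|A|^2$, and conservation of area). The only step one must not skip is the elementary inequality $H^2 \leq n|A|^2$, which reduces the $L^2$ bound on $H$ to the pointwise bound on $|A|$.
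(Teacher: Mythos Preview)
Your argument is correct and matches the paper's own proof essentially verbatim: the paper simply states that the corollary is a direct consequence of $H^2\leq n|A|^2$, Lemma~4.2, and Theorem~4.1, which is exactly the chain of inequalities you wrote out (with the area conservation from Lemma~3.1 supplying the factor $|M_0|$).
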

\begin{proof} It is a direct consequence of $H^2\leq n |A|^2$, Lemma 4.2 and
Theorem 4.1.
\end{proof}
Next we give the higher derivative estimates as Hamilton in [10].
\begin {coro}
Under the assumption of Theorem 4.1, we have the following higher
derivative estimates
$$|\nabla^m A|^2\leq C_m(T)$$
\end {coro}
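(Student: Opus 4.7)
The plan is to prove the estimates by induction on $m$, following the standard interpolation-and-maximum-principle argument of Hamilton. The base case $m=0$ is precisely Theorem 4.1. Assuming $|\nabla^{j}A|^2\leq C_j(T)$ for every $j<m$, I need to bound $|\nabla^m A|^2$.

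First I would derive the evolution equation for $|\nabla^m A|^2$ under the flow \eqref{1.3}. Using the evolution equations in Lemma 3.3 together with the commutator formulas $[\nabla,\triangle]$ on a hypersurface (which introduce curvature terms involving $A$), a direct computation gives a structural estimate of the form
\begin{equation*}
\Bigl(\frac{\partial}{\partial t}-h\triangle\Bigr)|\nabla^m A|^2 \leq -2h|\nabla^{m+1}A|^2 + C(m)\!\sum_{i+j+k=m}\! |\nabla^i A|\,|\nabla^j A|\,|\nabla^k A|\,|\nabla^m A|,
\end{equation*}
exactly as in the closed case. Corollary 4.1 gives a positive lower bound for $h$, so the good term $-2h|\nabla^{m+1}A|^2$ is genuinely negative. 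By the inductive hypothesis, every factor $|\nabla^i A|$ with $i<m$ is bounded by a constant depending on $T$, so after applying Young's inequality the right-hand side is dominated by $C_1(m,T)|\nabla^m A|^2+C_2(m,T)$.

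Next, to close the maximum-principle argument I would introduce the auxiliary function $f=|\nabla^m A|^2+K_m|\nabla^{m-1}A|^2$ with $K_m$ chosen large enough that the extra term $-2hK_m|\nabla^m A|^2$ produced by differentiating $|\nabla^{m-1}A|^2$ (cf. the same evolution identity at level $m-1$) dominates $C_1(m,T)|\nabla^m A|^2$. Then $(\partial_t-h\triangle)f\leq C_3(m,T)$ on the interior of $M_t$, and comparing $f$ with a linear function of $t$ yields the desired uniform bound on $[0,T]$.

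The main obstacle, as in Lemma 3.4 and Theorem 4.1, is that the maximum principle cannot be applied directly because $M_t$ has free boundary on the two planes $\Sigma_i$. I would handle this by the same reflection trick: the orthogonal Neumann condition $\langle\nu_{M_t},\nu_{\Sigma_i}\rangle=0$ and the rotational symmetry make $\tilde{\rho}$ smooth across $x_{n+1}=0$ and $x_{n+1}=d$, so the second fundamental form and all its covariant derivatives extend smoothly to $\tilde{M}_t$ with the correct parity. Thus a boundary maximum of $f$ becomes an interior maximum of its extension, where the interior estimate above applies. Induction on $m$ then completes the proof.
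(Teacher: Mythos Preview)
Your proposal is correct and follows essentially the same Hamilton-style induction argument as the paper: derive the evolution inequality for $|\nabla^m A|^2$, use the inductive hypothesis to control lower-order terms, and add a large multiple of $|\nabla^{m-1}A|^2$ so that its good $-2h|\nabla^m A|^2$ term absorbs the bad one before applying the maximum principle. Two minor remarks: the lower bound on $h$ you invoke is Corollary 4.2 in the paper (not 4.1), and the paper's evolution equation also carries a lower-order double sum $\sum_{r+s=m}\nabla^rA\ast\nabla^sA\ast\nabla^mA$ coming from the normalization term, though this is handled identically by your Young's-inequality step.
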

\begin{proof} First, we have
\begin{eqnarray*}
 \frac{\partial}{\partial
t}|\nabla^mA|^2&=&h\triangle|\nabla^mA|^2-2h|\nabla^{m+1}A|^2+\sum_{i+j+k=m}\nabla^iA\ast\nabla^jA\ast\nabla^kA\ast\nabla^mA
\\&+&\sum_{r+s=m}\nabla^rA\ast\nabla ^sA\ast\nabla^mA.
\end{eqnarray*}
We assume when $l \leq m$, we have $|\nabla^lA|^2\leq C_l(T)$. Then
for $n=m+1$, we have
$$ \frac{\partial}{\partial t}|\nabla^{m+1}A|^2\leq h\triangle|\nabla^{m+1}A|^2+C(T)_1\cdot(|\nabla^{m+1}A|^2+1).$$
We choose $f=|\nabla^{m+1}A|^2+N|\nabla^mA|^2$, where $N$ is a
constant large enough. Then
\begin{eqnarray*}
\frac{\partial }{\partial t}f&\leq&
h\triangle|\nabla^{m+1}A|^2+C(T)_1\cdot(|\nabla^{m+1}A|^2+1)
\\&+&Nh\triangle|\nabla^mA|^2-2hN|\nabla^{m+1}A|^2+C(T)_2
\\&\leq&h\triangle f-C(T)_3|\nabla^{m+1}A|^2+C(T)_4
\\&=&h\triangle f-C(T)_3(f-N|\nabla^mA|^2)+C(T)_5
\\&\leq&h\triangle f-C(T)_3f+C(T)_6.
\end{eqnarray*}
Thus $f\leq C_T$.
\end{proof}
\begin {coro}
$T=+\infty$.
\end {coro}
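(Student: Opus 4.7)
The plan is a standard continuation argument by contradiction. Suppose $T<+\infty$. The goal is to show that under this assumption the flow can be smoothly extended past $T$, violating the maximality of $[0,T)$.

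First I would gather all the a priori estimates already established on $[0,T)$. Lemma 3.5 gives a uniform positive lower bound $r$ and upper bound $R$ on the radius function $\rho$, so the hypersurfaces stay in a fixed compact subdomain of $G$. Lemma 4.1 gives a uniform bound $v\leq M_T$, so the surfaces are uniformly graphical over the axis. Theorem 4.1 provides a uniform bound $|A|^2\leq C_T$, and Corollary 4.3 upgrades this to uniform bounds $|\nabla^m A|^2\leq C_m(T)$ for every $m$. Finally, Corollary 4.1 and Corollary 4.2 yield $m_T\leq h(t)\leq C_3$, so the parabolic coefficient in \eqref{1.3} is uniformly pinched between positive constants on $[0,T)$.

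Next I would use the evolution equation $\tfrac{\partial}{\partial t}g_{ij}=2(1-hH)h_{ij}$ from Lemma 3.3(1). Since $h$, $H$ and $h_{ij}$ are uniformly bounded and $T<\infty$, the metrics $g_{ij}(\cdot,t)$ remain uniformly equivalent to $g_{ij}(\cdot,0)$ and are Lipschitz in $t$. Together with the uniform $C^k$ bounds on $A$, this lets me pass to a smooth limit $M_T:=\lim_{t\to T}M_t$ in the $C^\infty$ topology. The limit is still rotationally symmetric, still contained in $G$, still satisfies the orthogonality condition on $\Sigma_1,\Sigma_2$ (the Neumann condition is closed under smooth convergence), and still has $H>0$ by the lower bound on $h$ and the bound $h(t)\cdot H^2\leq h(t)\cdot n|A|^2$ combined with the argument of Lemma 3.4 applied up to $t=T$.

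Finally, since $M_T$ is a smooth, mean convex, rotationally symmetric hypersurface meeting $\Sigma_i$ orthogonally, the short-time existence theory for Problem 1.1 (cited in Remark 1.1, based on the strict parabolicity ensured by $h(T)>0$) produces a smooth solution on $[T,T+\varepsilon)$ for some $\varepsilon>0$. Gluing this to the solution on $[0,T)$ contradicts the choice of $T$ as the maximal existence time. The main obstacle is really only bookkeeping: one must be sure that all estimates—including the mean convexity used to keep \eqref{1.3} strictly parabolic and the lower bound on $h$—pass to the limit $t\to T$, and for this the higher derivative estimates of Corollary 4.3 are exactly what is needed.
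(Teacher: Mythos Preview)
Your proposal is correct and is precisely the standard continuation argument that the paper leaves implicit: the paper states Corollary 4.4 without proof, relying on the uniform higher derivative bounds of Corollary 4.3 (together with the bounds on $h(t)$ and the metric evolution from Lemma 3.3(1)) to extend the flow past any putative finite $T$ and reach a contradiction. One minor remark: the sentence justifying $H>0$ at $t=T$ via ``$h(t)\cdot H^2\leq h(t)\cdot n|A|^2$'' is not quite to the point---what you actually need for restarting the short-time existence is only $h(T)>0$, and that follows directly from the uniform lower bound $h(t)\geq m_T$ of Corollary 4.2 and smooth convergence.
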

\section {Proof of the main theorem}
Since the upper bound we derived above is a constant depending on
$T$, $|A|^2$ may be unbounded when $t$ tends to infinity. We will
show that this will not happen and the initial hypersurface
converges to a constant mean curvature surface.
\begin {thm}
The mean curvature $H$ of the evolving surfaces converge to a
constant as $t\rightarrow \infty$.
\end {thm}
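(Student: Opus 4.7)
The plan is to combine the monotonicity of enclosed volume with an algebraic identity to obtain $L^2$-in-time integrability of the speed $1-hH$, then upgrade this to pointwise convergence of $H$ by means of uniform-in-$t$ higher-order curvature estimates. Starting from the expansion
\[
\int_{M_t}(1-hH)^2\,d\m_t = |M_t|-2h\!\int_{M_t}\!H\,d\m_t + h^2\!\int_{M_t}\!H^2\,d\m_t,
\]
the area-preservation identity $\int_{M_t}(1-hH)H\,d\m_t = 0$ furnished by Lemma~3.1 gives $h\int H^2\,d\m_t = \int H\,d\m_t$, so the right-hand side collapses to
\[
\int_{M_t}(1-hH)\,d\m_t = \frac{d}{dt}\mathrm{Vol}(E_t),
\]
which is exactly the quantity computed in the proof of Lemma~3.2. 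Since $\mathrm{Vol}(E_t)$ is non-decreasing by Lemma~3.2 and bounded above by the volume of the cylinder of radius $R$ in $G$ (by Lemma~3.5), integrating in time produces
\[
\int_0^\infty\!\int_{M_t}(1-hH)^2\,d\m_t\,dt < \infty.
\]

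The principal technical step is to upgrade the local-in-time curvature bounds of Theorem~4.1 and Corollary~4.3, which depend on $T$ only through the constant $M_T$ on $v$, to bounds that are uniform in $t$. I expect this to be the main obstacle, as explicitly flagged at the start of Section~5. The mechanism should mirror the proof of Lemma~4.1: at any candidate blow-up point for $v$, the relations $\dot\rho\to\infty$ and $\ddot\rho/(1+\dot\rho^2)^{3/2}\to 0$ force $H\to 0$, contradicting the preserved mean convexity from Lemma~3.4. What needs to be added beyond the finite-time argument is some uniform lower control on $H$ near the potential blow-up, which can be extracted by combining the $L^1$-in-time decay of $(1-hH)^2$ with the upper bound $h\le C_3$ from Corollary~4.1. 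Once $v$, and hence $|A|^2$, are uniformly bounded, Corollary~4.3 supplies uniform bounds on all $|\nabla^m A|^2$.

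With uniform $C^\infty$ bounds in hand, $\frac{d}{dt}\int_{M_t}(1-hH)^2\,d\m_t$ is uniformly bounded in $t$, so the $L^1$-integrability from the first step forces $\int_{M_t}(1-hH)^2\,d\m_t\to 0$. The uniform Lipschitz bound on $hH$ on a surface of uniformly bounded geometry (rotationally symmetric with $r\le\rho\le R$) then upgrades the $L^2$-convergence to uniform convergence $\sup_{M_t}|1-hH|\to 0$. Since $h$ is bounded away from both $0$ and $\infty$ by Corollary~4.1 and Corollary~4.2, extracting a subsequential limit $h(t_k)\to h_\infty>0$ gives $H\to 1/h_\infty$, a positive constant, which is the desired conclusion.
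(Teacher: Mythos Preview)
Your approach and the paper's share the same starting point: the volume monotonicity of Lemma~3.2 combined with the algebraic collapse
\[
\int_{M_t}(1-hH)^2\,d\mu_t \;=\; \int_{M_t}(1-hH)\,d\mu_t \;=\; |M_t| - \frac{\bigl(\int_{M_t}H\,d\mu_t\bigr)^2}{\int_{M_t}H^2\,d\mu_t}\;=\;\frac{d}{dt}\mathrm{Vol}(E_t),
\]
followed by integrating in $t$ against the upper bound on $\mathrm{Vol}(E_t)$. From there the routes diverge. The paper stops almost immediately: once $\int_{M_t}(1-hH)\,d\mu_t\to 0$, the right-hand side is precisely the Cauchy--Schwarz deficit for the pair $(1,H)$, so its vanishing forces $H$ to be spatially constant in the limit. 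You instead take the more robust (and longer) road: establish uniform-in-$t$ bounds on $|A|$ and all $|\nabla^m A|$, use these to control $\frac{d}{dt}\int(1-hH)^2$ and hence pass from $L^1$-in-time to $\int(1-hH)^2\to 0$, and finally interpolate to $\sup_{M_t}|1-hH|\to 0$. What you gain is a genuine pointwise convergence statement with explicit control; what the paper gains is brevity, at the cost of leaving implicit exactly the step (why the integrand tends to zero, not merely its time integral) that your uniform bounds are designed to justify.

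One caution on your sketch of the uniform bound: the mechanism you propose for ruling out blow-up of $v$ as $t\to\infty$ relies on extracting a \emph{uniform} positive lower bound for $H$ near the putative blow-up point from the $L^1$-in-time smallness of $(1-hH)^2$ together with $h\le C_3$. This is not immediate: $L^1$-in-time integrability of a nonnegative quantity does not by itself preclude $H$ from dipping toward $0$ along a sequence of times, and the finite-time argument of Lemma~4.1 used strict positivity of $H$ at the limit point, not a quantitative lower bound. You will need either a separate argument for $\inf_{M_t}H\ge c>0$ uniformly (e.g.\ via the evolution equation for $H$ and a Harnack-type comparison, or by exploiting the rotational symmetry more directly), or to rearrange the logic so that the curvature bound comes after, not before, the convergence of $H$ to a constant. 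The paper sidesteps this by never explicitly claiming a uniform $|A|$-bound prior to Theorem~5.1.
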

\begin{proof} Since $\frac{d}{dt} Vol(E_t)=\int _{M_t}(1-hH)d\mu_t.$
\\Then we have
\begin{eqnarray*}
\int_0^{\infty}\frac{d}{dt} Vol(E_t)dt&=&\int_0^{\infty}\int
_{M_t}(1-hH)d\mu_tdt\\&=&Vol(E_{\infty)}-Vol(E_0)\leq C.
\end{eqnarray*}
Therefore,$$\lim_{t\rightarrow \infty}\int _{M_t}(1-hH)d\mu_t=0.$$
Thus,$$\lim_{t\rightarrow \infty}\int_{M_t}d\mu_t=\lim_{t\rightarrow
\infty}\frac{(\int_{M_t Hd\m_t})^2}{\int_{M_t}H^2d\m_t}. $$ Then by
Cauchy-Schwarz inequality, $H=C$ for some constant.
\end{proof}
\begin{proof}
Rotationally symmetric hypersurfaces of constant mean curvature in
$\mathbb{R}^{n+1}$ are plane, sphere, cylinder, catenoid, unduloid
and nodoid, they are known as the Delaunay surfaces (see [8]). Our
boundary conditions excludes the possibilities of plane, sphere,
catenoid and nodoid. In [2] (see Section 1), Athanassenas use the
condition $|M_0|\leq \frac{V}{d}$ to exclude the existence of
unduloids in $G$. So our possibility can only be the cylinder. Thus
the Main Theorem is proved.
\end{proof}

\ack{The result of this paper is surveyed under the
supervision of Professor Sheng WeiMin at Zhejiang University. The
author would like to express sincere gratitude to Professor Sheng
WeiMin for his guide in these years. His careful reading and
comments have led to an improvement of this paper.}


\end{document}